
\documentclass[10pt]{amsart}
\usepackage{enumerate,amsmath,amssymb,latexsym,
amsfonts, amsthm, amscd}


\setlength{\textwidth}{14.5cm}
\setlength{\textheight}{22cm}

\hoffset=-2.0cm
\voffset=-.8cm


\theoremstyle{plain}
\newtheorem{theorem}{Theorem}

\numberwithin{equation}{section}

\newcommand{\ra}{\rightarrow}

\newcommand{\meet}{\wedge}
\newcommand{\join}{\vee}

\setcounter{secnumdepth}{0}


\begin{document}

\title {Distributive Quotients}

\date{}

\author[P.L. Robinson]{P.L. Robinson}

\address{Department of Mathematics \\ University of Florida \\ Gainesville FL 32611  USA }

\email[]{paulr@ufl.edu}

\subjclass{} \keywords{}

\begin{abstract}

We note that each lattice $L$ has a unique largest distributive quotient, of which every distributive quotient of $L$ is itself a quotient.

\end{abstract}

\maketitle

\bigbreak

Let $L$ be a lattice, with meet and join denoted by $\meet$ and $\join$ respectively. A {\it congruence} on $L$ is an equivalence relation $\theta$ on $L$ that is compatible with meet and join in the sense 
	\[(a_1, a_2) \in \theta \; {\rm and} \; (b_1, b_2) \in \theta \Rightarrow (a_1 \meet b_1, a_2 \meet b_2) \in \theta \; {\rm and} \; (a_1 \join b_1, a_2 \join b_2) \in \theta
\]
whenever $a_1, b_1, a_2, b_2 \in L$; the corresponding quotient lattice is the set $L/\theta$ of blocks (or equivalence classes) with meet and join well-defined by
	\[[a]_{\theta} \meet [b]_{\theta} = [a \meet b]_{\theta}, \; [a]_{\theta} \join [b]_{\theta} = [a \join b]_{\theta}
\]
whenever $a, b \in L$. A lattice $L$ is {\it distributive} (notation: $L \in \mathbb{D}$) exactly when it satisfies either (hence each) of the equivalent conditions 
	\[(\forall a, b, c \in L) \; \; a \meet (b \join c) = (a \meet b) \join (a \meet c), 
\]
  \[(\forall a, b, c \in L) \; \; a \join (b \meet c) = (a \join b) \meet (a \join c). 
\]
The purpose of this brief note is to record certain elementary facts regarding the distributive quotients of an arbitrary lattice: in particular, the fact that each lattice $L$ has a unique largest distributive quotient, of which every distributive quotient of $L$ is itself a quotient. 

\medbreak 

The congruences on $L$ themselves constitute a lattice $\Theta (L) = {\rm Con} (L)$ in which meet is intersection and the join of two congruences is the transitive closure of their union. Within $\Theta (L)$ we single out those congruences of $L$ relative to which the quotient is distributive: 
	\[\Theta_{\mathbb{D}} (L) = \{ \theta \in \Theta (L) : L/\theta \in \mathbb{D} \}. 
\]

\medbreak 

Note that $\Theta_{\mathbb{D}} (L)$ is a filter in $\Theta (L)$: that is, an up-set closed under finite intersections. In fact, more is true.

\begin{theorem} 
$\Theta_{\mathbb{D}} (L) \subseteq \Theta (L)$ is an up-set that is closed under arbitrary intersections. 
\end{theorem}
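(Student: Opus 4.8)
The plan is to verify the two assertions separately, the up-set property and closure under arbitrary intersections, with the second resting on the observation that distributivity is captured by a single identity, hence by the membership of a single pair in the congruence.

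First I would dispose of the up-set property. Suppose $\theta \in \Theta_{\mathbb{D}} (L)$ and $\phi \in \Theta (L)$ with $\theta \subseteq \phi$. The assignment $[a]_{\theta} \mapsto [a]_{\phi}$ is a well-defined surjective lattice homomorphism $L/\theta \ra L/\phi$ (well-defined precisely because $\theta \subseteq \phi$, and a homomorphism because meet and join are computed block-wise in each quotient). Since a homomorphic image of a distributive lattice is distributive, and $L/\theta \in \mathbb{D}$ by hypothesis, it follows that $L/\phi \in \mathbb{D}$, so $\phi \in \Theta_{\mathbb{D}} (L)$.

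The core of the argument is the intersection claim. Let $(\theta_i)_{i \in I}$ be any family in $\Theta_{\mathbb{D}} (L)$ and put $\theta = \bigcap_{i \in I} \theta_i$, which is again a congruence because meet in $\Theta (L)$ is intersection. The key step is to rewrite the distributive law in each quotient as a pair-membership condition: for each $i$ and each $a, b, c \in L$, the well-definedness of meet and join in $L/\theta_i$ makes the identity
	\[[a]_{\theta_i} \meet ([b]_{\theta_i} \join [c]_{\theta_i}) = ([a]_{\theta_i} \meet [b]_{\theta_i}) \join ([a]_{\theta_i} \meet [c]_{\theta_i})
\]
equivalent to the single membership
	\[\bigl( a \meet (b \join c), \; (a \meet b) \join (a \meet c) \bigr) \in \theta_i.
\]
Because $L/\theta_i \in \mathbb{D}$, this membership holds for every $i$; hence the displayed pair lies in $\bigcap_{i \in I} \theta_i = \theta$, which says exactly that the distributive law holds in $L/\theta$. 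Therefore $\theta \in \Theta_{\mathbb{D}} (L)$, as required.

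The argument is essentially formal once the right reformulation is in hand, so I do not anticipate a genuine obstacle; the one point demanding care is the reduction of distributivity to a single pair-membership condition, which relies on fixing one of the two equivalent forms of the distributive law and on the fact that meet and join are evaluated block-wise in every quotient. This reduction is also precisely what lets the statement pass from finite to \emph{arbitrary} intersections: distributivity is an equational condition, and an equation holds in every $L/\theta_i$ if and only if the corresponding pair lies in every $\theta_i$, a condition that is manifestly stable under arbitrary intersection.
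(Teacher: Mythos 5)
Your proof is correct, but your argument for closure under arbitrary intersections takes a genuinely different route from the paper's. The paper embeds $L/(\cap_{\lambda} \theta_{\lambda})$ into the product $\prod_{\lambda} (L/\theta_{\lambda})$ via the canonical map $a \mapsto ([a]_{\theta_{\lambda}})_{\lambda}$ and invokes the facts that products and sublattices of distributive lattices are distributive; this is the standard subdirect-product argument, and it is the form that transfers verbatim to any equational class (as the paper exploits in its Remark 1). You instead translate the distributive identity in each quotient into the membership condition $\bigl( a \meet (b \join c), (a \meet b) \join (a \meet c) \bigr) \in \theta_i$ for all $a, b, c \in L$ --- which is a correct and complete reformulation, since every element of a quotient is a block $[a]_{\theta_i}$ and the operations are computed block-wise --- and then observe that such a condition is manifestly stable under intersection. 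Your version is more elementary and self-contained (it needs nothing beyond the definition of the quotient operations, not the closure properties of $\mathbb{D}$ under products and sublattices), and it too generalizes to any equational class, since any lattice identity translates into a pair-membership condition in exactly the same way. Your treatment of the up-set property coincides with the paper's. Both arguments are sound; the paper's is the one that packages most cleanly in universal-algebraic language, while yours makes the mechanism of "why arbitrary intersections work" maximally transparent.
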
 

\begin{proof} 
$\Theta_{\mathbb{D}} (L)$ is an up-set: if $\Theta_{\mathbb{D}} (L) \ni \theta_0 \subseteq \theta \in \Theta (L)$ then the inclusion $\theta_0 \subseteq \theta$ induces a surjective homomorphism $L/\theta_0 \ra L/\theta$ realizing $L/\theta$ as a quotient of the distributive lattice $L/\theta_0$; thus $L/\theta \in \mathbb{D}$ and so $\theta \in \Theta_{\mathbb{D}} (L)$. $\Theta_{\mathbb{D}} (L)$ is closed under arbitrary intersections: if  $\theta_{\lambda} \in \Theta_{\mathbb{D}} (L)$ for each $\lambda \in \Lambda$ then the canonical map 
	\[L \ra \prod_{\lambda} (L/\theta_{\lambda}) : a \mapsto ([a]_{\theta_{\lambda}})_{\lambda}
\]
factors through an injective homomorphism 
	\[L/(\cap_{\lambda} \theta_{\lambda}) \ra \prod_{\lambda} (L/\theta_{\lambda}) \in \mathbb{D};
\]
thus $L/(\cap_{\lambda} \theta_{\lambda}) \in \mathbb{D}$ and so $\cap_{\lambda} \theta_{\lambda} \in \Theta_{\mathbb{D}} (L)$. 
\end{proof}

\medbreak 

Thus, $\Theta_{\mathbb{D}} (L)$ has as least element its infimum
	\[\delta_L = \bigwedge \Theta_{\mathbb{D}} (L) = \bigcap \{ \theta : \theta \in \Theta_{\mathbb{D}} (L) \} \in \Theta_{\mathbb{D}} (L)
\]
and so $\Theta_{\mathbb{D}} (L)$ is principal with $\delta_L$ as generator: 
	\[\Theta_{\mathbb{D}} (L) = \: \uparrow \delta_L = \{ \theta \in \Theta (L) : \theta \supseteq \delta_L \}. 
\]
Observe that $L/\delta_L$ is the largest distributive quotient of $L$: in fact, if $L/\theta$ is any distributive quotient of $L$ then $\theta \in \Theta_{\mathbb{D}} (L) = \: \uparrow \delta_L$; thus, $\theta$ contains $\delta_L$ and so there is a canonical surjective homomorphism $L/\delta_L \ra L/\theta$. 

\medbreak

We may identify the generator $\delta$ for an arbitrary quotient as follows. 

\begin{theorem} 
If $\theta \in \Theta (L)$ then $\delta (L/\theta) = \delta_L \join \theta/\theta$. 
\end{theorem}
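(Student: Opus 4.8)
The plan is to reduce the statement to two standard facts from universal algebra: the correspondence theorem for congruence lattices and the third isomorphism theorem. Fix the congruence $\theta \in \Theta(L)$ and recall that the assignment $\psi \mapsto \psi/\theta$ — where $\psi/\theta$ is the congruence on $L/\theta$ defined by $([a]_{\theta},[b]_{\theta}) \in \psi/\theta \Leftrightarrow (a,b) \in \psi$ — is an order-isomorphism from the principal up-set $\uparrow\theta = \{\psi \in \Theta(L) : \psi \supseteq \theta\}$ onto the entire congruence lattice $\Theta(L/\theta)$, and that it satisfies $(L/\theta)/(\psi/\theta) \cong L/\psi$. I would begin by recording this correspondence explicitly, since it is the engine of the whole argument.

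Next I would transport the distributivity condition across this isomorphism. Every congruence on $L/\theta$ has the form $\psi/\theta$ for a unique $\psi \supseteq \theta$, and since distributivity is preserved under isomorphism, $(L/\theta)/(\psi/\theta) \in \mathbb{D}$ if and only if $L/\psi \in \mathbb{D}$, i.e. if and only if $\psi \in \Theta_{\mathbb{D}}(L)$. By the first theorem together with the ensuing remark, $\Theta_{\mathbb{D}}(L) = \: \uparrow\delta_L$, so this condition reads $\psi \supseteq \delta_L$. Combining it with the standing requirement $\psi \supseteq \theta$, the congruences of $L/\theta$ with distributive quotient correspond precisely to those $\psi$ satisfying $\psi \supseteq \delta_L$ and $\psi \supseteq \theta$, that is, $\psi \supseteq \delta_L \join \theta$. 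In other words, under the order-isomorphism above $\Theta_{\mathbb{D}}(L/\theta)$ is the image of the up-set $\uparrow(\delta_L \join \theta)$ taken inside $\uparrow\theta$.

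Finally I would read off the generator. Because $\psi \mapsto \psi/\theta$ is an order-isomorphism, it carries the least element of $\uparrow(\delta_L \join \theta)$ — namely $\delta_L \join \theta$ itself — to the least element of $\Theta_{\mathbb{D}}(L/\theta)$, which, applying the first theorem and its remark to $L/\theta$ in place of $L$, is exactly $\delta(L/\theta)$. Hence $\delta(L/\theta) = (\delta_L \join \theta)/\theta = \delta_L \join \theta / \theta$, as claimed. Note that $\delta_L \join \theta$ lies above both $\theta$ and $\delta_L$, so the expression is meaningful; moreover $\delta_L \join \theta \in \Theta_{\mathbb{D}}(L)$, since $\Theta_{\mathbb{D}}(L)$ is an up-set containing $\delta_L$, which confirms that the congruence $(\delta_L \join \theta)/\theta$ genuinely has distributive quotient.

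I expect the only delicate point to be the faithful invocation of the correspondence: one must verify that $\psi/\theta$ is well-defined and that the bijection $\psi \leftrightarrow \psi/\theta$ is order-preserving in both directions, so that least elements are matched to least elements. Everything else is a direct translation through the isomorphism $(L/\theta)/(\psi/\theta) \cong L/\psi$ and involves no computation with the lattice operations themselves.
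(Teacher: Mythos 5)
Your proposal is correct and follows essentially the same route as the paper: both arguments rest on the correspondence theorem for congruences of $L/\theta$ and the isomorphism $(L/\theta)/(\psi/\theta) \cong L/\psi$, used to identify $\Theta_{\mathbb{D}}(L/\theta)$ with the image of $\uparrow(\delta_L \join \theta)$ and then read off the least element. Your version merely makes the order-isomorphism explicit where the paper leaves it implicit.
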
 

\begin{proof} 
For convenience, write $\delta = \delta_L$. On the one hand, the isomorphism 
	\[(L/\theta)/(\delta \join \theta/\theta) \equiv L/(\delta \join \theta) \in \mathbb{D}
\]
places the congruence $\delta \join \theta/\theta$ in $\Theta_{\mathbb{D}} (L/\theta)$. On the other hand, let $\phi$ be a congruence of $L$ containing $\theta$: if $\phi/\theta \in \Theta_{\mathbb{D}} (L/\theta)$ then the isomorphism 
	\[L/\phi \equiv (L/\theta)/(\phi/\theta) \in \mathbb{D}
\]
forces $\phi \in \Theta_{\mathbb{D}} (L)$ so $\phi$ also contains $\delta$ and $\phi \in \: \uparrow (\delta \join \theta)$. Conclusion: $\Theta_{\mathbb{D}} (L/\theta) = \: \uparrow (\delta \join \theta/\theta)$. 
\end{proof}  

\medbreak 

We may identify the generator $\delta$ for a finite product as follows. 

\begin{theorem} 
$\delta (L_1 \times L_2) = \delta_{L_1} \times \delta_{L_2}$. 
\end{theorem}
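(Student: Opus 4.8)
The plan is to prove the two inclusions $\delta(L_1 \times L_2) \subseteq \delta_{L_1} \times \delta_{L_2}$ and $\delta_{L_1} \times \delta_{L_2} \subseteq \delta(L_1 \times L_2)$ separately. Throughout I would write $\delta_i = \delta_{L_i}$ and note at the outset that $\delta_1 \times \delta_2$, acting coordinatewise, is a congruence on $L_1 \times L_2$.

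For the first inclusion I would exhibit $\delta_1 \times \delta_2$ as a distributive congruence. The canonical isomorphism
	\[(L_1 \times L_2)/(\delta_1 \times \delta_2) \equiv (L_1/\delta_1) \times (L_2/\delta_2)\]
presents the quotient as a product of the distributive lattices $L_1/\delta_1$ and $L_2/\delta_2$; since distributivity is an equational property, preserved under products, the quotient lies in $\mathbb{D}$. Thus $\delta_1 \times \delta_2 \in \Theta_{\mathbb{D}} (L_1 \times L_2)$, and because $\delta(L_1 \times L_2)$ is the least element of this filter, $\delta(L_1 \times L_2) \subseteq \delta_1 \times \delta_2$.

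The reverse inclusion is the substantive part. Writing $\sigma = \delta(L_1 \times L_2)$, a distributive congruence, the key observation is that for each fixed $c \in L_2$ the slice map $\iota_c \colon L_1 \ra L_1 \times L_2 : x \mapsto (x, c)$ is an injective lattice homomorphism (the constant second coordinate is harmless, since $c \meet c = c = c \join c$). Pulling $\sigma$ back along $\iota_c$ yields a congruence $\theta^c = \{(x, y) \in L_1 \times L_1 : ((x,c),(y,c)) \in \sigma \}$ on $L_1$ for which $L_1/\theta^c$ embeds into the distributive lattice $(L_1 \times L_2)/\sigma$; hence $L_1/\theta^c \in \mathbb{D}$ and so $\theta^c \supseteq \delta_1$. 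In other words $(x,y) \in \delta_1$ implies $((x,c),(y,c)) \in \sigma$ for every $c \in L_2$, and symmetrically $(u,v) \in \delta_2$ implies $((d,u),(d,v)) \in \sigma$ for every $d \in L_1$.

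It then remains to assemble these two facts. Given $((a_1, a_2),(b_1, b_2)) \in \delta_1 \times \delta_2$, so that $(a_1, b_1) \in \delta_1$ and $(a_2, b_2) \in \delta_2$, the first fact (with $c = a_2$) gives $((a_1, a_2),(b_1, a_2)) \in \sigma$ and the second (with $d = b_1$) gives $((b_1, a_2),(b_1, b_2)) \in \sigma$; transitivity of $\sigma$ then yields $((a_1, a_2),(b_1, b_2)) \in \sigma$. Hence $\delta_1 \times \delta_2 \subseteq \sigma = \delta(L_1 \times L_2)$, which together with the first inclusion gives equality. I expect the main obstacle to be this reverse inclusion, and specifically the realization that the slice maps are genuine homomorphisms; once that is in hand the pullback-and-transitivity argument is routine, and it has the merit of avoiding any appeal to a general product-decomposition (Fraser--Horn) theorem for the congruences of $L_1 \times L_2$.
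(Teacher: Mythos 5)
Your proof is correct, and your first inclusion is exactly the paper's. For the reverse inclusion you take a genuinely different route: the paper invokes the structural fact that \emph{every} congruence on $L_1 \times L_2$ decomposes as $\theta_1 \times \theta_2$ with $\theta_i \in \Theta(L_i)$ (the absence of skew congruences on products of lattices), after which the claim is immediate because each factor quotient $L_i/\theta_i$ must then be distributive. You avoid that decomposition theorem entirely: the slice maps $x \mapsto (x,c)$ are indeed injective lattice homomorphisms, the pullback of $\sigma = \delta(L_1 \times L_2)$ along such a slice is a congruence $\theta^c$ whose quotient $L_1/\theta^c$ embeds in the distributive lattice $(L_1 \times L_2)/\sigma$, and closure of $\mathbb{D}$ under sublattices forces $\theta^c \supseteq \delta_{L_1}$; the two-step transitivity argument through the intermediate point $(b_1, a_2)$ then assembles the containment cleanly. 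Your approach trades the paper's stronger input (the product decomposition of congruences, which is not proved in the note and is the only nonelementary ingredient of its argument) for the weaker and already-used fact that $\mathbb{D}$ is closed under sublattices, at the mild cost of proving only the equality $\delta(L_1\times L_2) = \delta_{L_1}\times\delta_{L_2}$ rather than the paper's explicit identification $\Theta_{\mathbb{D}}(L_1 \times L_2) = \: \uparrow (\delta_{L_1} \times \delta_{L_2})$ --- though running your slice argument with an arbitrary distributive congruence $\theta$ in place of $\sigma$ would recover that as well. Both arguments generalize verbatim to any equational class $\mathbb{K}$ as in Remark 1.
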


\begin{proof} 
For convenience, write $\delta_1 = \delta_{L_1}$ and $\delta_2 = \delta_{L_2}$. On the one hand, the isomorphism 
	\[(L_1 \times L_2)/(\delta_1 \times \delta_2) \equiv (L_1/\delta_1) \times (L_2/\delta_2) \in \mathbb{D}
\]
places $\delta_1 \times \delta_2$ in $\Theta_{\mathbb{D}} (L_1 \times L_2)$. On the other hand, each $\theta \in \Theta(L_1 \times L_2)$ has the form $\theta_1 \times \theta_2$ for $\theta_1 \in \Theta (L_1)$ and $\theta_2 \in \Theta (L_2)$; now, if $\theta \in \Theta_{\mathbb{D}} (L_1 \times L_2)$ then 
	\[(L_1/\theta_1) \times (L_2/\theta_2) \equiv (L_1 \times L_2)/\theta \in \mathbb{D}
\]
forces $(L_1/\theta_1) \in \mathbb{D}$ and $(L_2/\theta_2) \in \mathbb{D}$ so that $\theta_1 \in \Theta_{\mathbb{D}} (L_1) = \: \uparrow \delta_1$ and $\theta_2 \in \Theta_{\mathbb{D}} (L_2) = \: \uparrow \delta_2$ whence $\theta = \theta_1 \times \theta_2 \in \: \uparrow (\delta_1 \times \delta_2)$. Conclusion: $\Theta_{\mathbb{D}} (L_1 \times L_2) = \: \uparrow (\delta_1 \times \delta_2)$. 
\end{proof} 

\medbreak 

Let us identify the generator $\delta_L$ of $\Theta_{\mathbb{D}} (L)$ for a lattice $L$ in some basic examples.  

\medbreak  

\noindent 
{\bf Example 0}. If $L$ is a distributive lattice then, as each quotient of $L$ is distributive, $\Theta_{\mathbb{D}} (L) = \Theta (L)$ and $\delta_L = \underline{0}$ is the equality (or diagonal) relation on $L$.

\medbreak 

\noindent 
{\bf Example 1}. The non-distributive `diamond' $M_3$ is simple; it follows at once that $\Theta_{\mathbb{D}} (M_3) = \{ \underline{1}\}$ so that $\delta_{M_3} = \underline{1} = M_3 \times M_3$ is the trivial congruence that fully collapses $M_3$. 

\medbreak 

\noindent 
{\bf Example 2}. The non-modular `pentagon' $N_5 = \{ 0, a, b, c, 1\}$ with $a > b$ yields a distributive quotient as soon as $a$ and $b$ are identified. Accordingly, $\delta_{N_5}$ is the principal congruence $\theta(a, b)$: that is, the smallest congruence containing the pair $(a, b)$; its only nontrivial block is the doubleton $\{a, b\}$.   

\medbreak 

\noindent
{\bf Example 3}. The free modular lattice $F_{\mathbb{M}} (3)$ on three generators $x, y, z$ admits a unique homomorphism to the free distributive lattice $F_{\mathbb{D}} (3)$ on $x, y, z$ respecting the generators; the kernel of this homomorphism is the principal congruence $\theta (u, v)$ that identifies $u = (y \join z) \meet (z \join x) \meet (x \join y)$ and $v = (y \meet z) \join (z \meet x) \join (x \meet y)$. As no smaller congruence can yield a distributive quotient, $\delta_{F_{\mathbb{M}} (3)} = \theta (u, v)$. The nontrivial blocks of this congruence are six doubletons and the diamond with top $u$ and bottom $v$. 

\medbreak 

\noindent
{\bf Example 4}. The case of the free lattice $F(n)$ on $n$ generators is similar: $\delta_{F(n)}$ is the kernel of the unique homomorphism $F(n) \ra F_{\mathbb{D}} (n)$ that respects all $n$ generators. 
 
\medbreak 

\medbreak 

\noindent
{\bf Remark 1}. We have considered only the class $\mathbb{D}$ of distributive lattices, but entirely similar considerations apply to the class $\mathbb{M}$ of modular lattices. Indeed, they apply to any equational class $\mathbb{K}$ of lattices: as $\mathbb{K}$ is closed under the formation of quotients, products and sublattices, if $L$ is any lattice then the filter
	\[\Theta_{\mathbb{K}}(L) = \{ \theta \in \Theta (L) : L/\theta \in \mathbb{K} \}
\]
is principal with generator 
	\[\kappa_L = \bigwedge \Theta_{\mathbb{K}} (L) = \bigcap \{ \theta : \theta \in \Theta_{\mathbb{K}} (L) \} \in \Theta_{\mathbb{K}} (L). 
\]
In particular, $L$ has a largest quotient $L/\kappa_L$ in any equational class $\mathbb{K}$ and each quotient of $L$ in $\mathbb{K}$ is actually a quotient of $L/\kappa_L$.   

\medbreak 

\noindent
{\bf Remark 2}. This offers a perspective on the free lattice $F_{\mathbb{K}} (P)$ over the class $\mathbb{K}$ generated by the poset $P$ as discussed in section 5 of [1]: thus, let $F(P)$ be the free lattice generated by $P$ as in [1] Corollary 5.7; the lattice $F_{\mathbb{K}} (P)$ arises as the quotient of $F(P)$ modulo the congruence $\kappa_{F(P)}$ when the elements of $P \subseteq F(P)$ lie in different blocks of $\kappa_{F(P)}$. 

\section{References}

\noindent 
[1] G. Gr$\Ddot {\rm a}$tzer, {\it Lattice Theory: First Concepts and Distributive Lattices}, W.H. Freeman and Company (1971); Dover Publications (2009). 

\end{document}